    \newtheorem{theorem}    {Theorem}       [section]
    \newtheorem{definition} [theorem]       {Definition}
\newcommand{\Z}{{\mathbb Z}}
\newcommand{\R}{{\mathbb R}}
\newcommand{\C}{{\mathbb C}}
\newcommand \M {{\mathcal M}}
\newcommand \Spec {{\operatorname{Spec }}}
\begin{document}

        \title{Triviality of a trace on the space of commuting trace-class self-adjoint operators}
        \author{Sung Myung}
        \address{Department of Mathematics Education, Inha University,
                  253 Yonghyun-dong, Nam-gu,
                    Incheon, 402-751 Korea }
        \email{s-myung1\char`\@inha.ac.kr}

%\renewcommand{\indexname}{Index}
%\printindex

\begin{abstract}
In the present article, we investigate a possibility of a real-valued map on the space of tuples of commuting trace-class self-adjoint operators,
which behaves like the usual trace map on the space of trace-class linear operators.
It turns out that such maps are related with continuous group homomorphisms from the Milnor's $K$-group of the real numbers into
the additive group of real numbers. Using this connection, it is shown that any such trace map must be trivial.
\end{abstract}

        \date{}
        \maketitle
%        \tableofcontents

        \section{Introduction} \label{intro}

Let $H$ be a Hilbert space whose elements represent physical states.
In quantum mechanics, observables are represented by self-adjoint linear operators on $H$ and they are simultaneously measurable without deviation
if the corresponding operators commute. This practice gives us a reason to study commuting self-adjoint operators on a Hilbert space.

A trace-class operator on a Hilbert space is a bounded linear operator for which a trace is well-defined (\cite{MR1468230}). The space of
such operators are important also because it enjoys a duality with the space of bounded linear operators because, for a given bounded linear
operator $T$ on $H$, we obtain a continuous linear functional on the space of trace-class operators defined by $A \mapsto Tr (AT)$.
For such a class of operators, the trace map is one of the most important invariants we may think of and so we may ask if it can be generalized
to tuples of commuting operators.

Let $X_l$ be the collection of $l$-tuples of commuting self-adjoint trace-class linear operators on some Hilbert space $H$.
Each element of $X_l$ can be represented by a symbol $(A_1, \dots, A_l, H)$, where $A_1, \dots, A_l$ are commuting self-adjoint trace-class
operators on some Hilbert space $H$. It becomes a monoid if we define the addition by the rule
$(A_1, \dots, A_l, H_1) + (B_1, \dots, B_l, H_2) = (A_1 \oplus B_1, \dots, A_l \oplus B_l, H_1 \oplus H_2)$.
There exists a natural projection map from $X_l$ to the set of Hilbert spaces and,
for each fixed $H$, the set $(X_l)_H$ of $l$-tuples of commuting trace-class self-adjoint operators on $H$, i.e,
the `fibre over $H$' can be equipped with a structure of a real Banach space, whose topology is used in this article even though the component-wise
addition will not be considered.
$X_l$ will be called the space of commuting trace-class self-adjoint operators in the present article.

When $l=1$, we have the usual trace map $Tr: X_1 \rightarrow \R$. It is well-known that the trace of a trace-class operator is equal to the
sum of its eigenvalues repeated according to their multiplicities (\cite{MR0370246}).
We ask whether there exists a `trace' on $X_l$ for $l \ge 2$.
\begin{definition} \label{trace}
A monoid homomorphism $\psi$ from $X_l$ into the additive group $\R$ of real numbers is said to be a trace map
if the following three conditions are satisfied:\\
$(i)$ For each fixed Hilbert space $H$, the restriction of $\psi$ on each fibre $(X_l)_H$ is continuous.\\
$(ii)$ $\psi$ is additive, that is, for arbitrary commuting self-adjoint trace-class operators $A_1, \dots, A_l, B$ on a Hilbert space $H$,
we have
\begin{multline*}
\psi (A_1, \dots, A_{i-1}, A_i+B, A_{i+1}, \dots, A_l, H) \\
= \psi (A_1, \dots, A_{i-1}, A_i, A_{i+1}, \dots, A_l, H) + \psi (A_1, \dots, A_{i-1}, B, A_{i+1}, \dots, A_l, H)
\end{multline*}
$(iii)$ If $h: H_1 \rightarrow H_2$ is an isomorphism of Hilbert spaces, then
$\psi (A_1, \dots, A_l, H_1) = \psi (h A_1 h^{-1}, \dots, h A_l h^{-1}, H_2)$. \\
$(iv)$ If we have commuting $A_1(t), \dots, A_l(t) \in GL_n(\R[t])$ such that $A_i(0)$ and $A_i(1)$ are
positive-definite symmetric matrices, then
we have $$\psi (\log A_1(0), \dots, \log A_l(0), \C^n) = \psi (\log A_1(1), \dots, \log A_l(1), \C^n).$$
\end{definition}

The reason behind the condition $(iv)$ is that a trace should be realized from a determinant through the exponential function and a
determinant of commuting invertible operators should be invariant for a polynomial homotopy, i.e., an $l$-tuple of commuting invertible matrices with
polynomial entries. The logarithm of a positive-definite symmetric matrix $A$ is given by $S(\log D) S^{-1}$, where $D$ is diagonal matrix with
positive real eigenvalues, $S$ is the orthogonal matrix of corresponding eigenvectors
and $\log$ is the natural extension of the usual real-valued logarithm.

We note that the conditions $(iii)$ and $(iv)$ are automatically satisfied if we require the following much stronger condition:

$(iii')$ We have $\psi (A_1, \dots, A_l, H_1) = \psi (B_1, \dots, B_l, H_2)$
whenever  $Tr(A_i)=Tr(B_i)$ and $H_1 \cong H_2$ for every $i=1, \dots, l$.

If $(iii')$ holds, then since the determinant of $A_i(0)$ is equal to the determinant of $A_i(1)$ for every $i$,
the trace of $\log A_i(0)$ is equal to the trace of $\log A_i(1)$ for every $i$ and thus we must have
$$\psi(\log A_1(0), \dots, \log A_l(0), H_0) = \psi(\log A_1(1), \dots, \log A_l(1), H_1).$$

Under this definition, we show that any trace map $\psi$ on $X_l$ must be identically 0 when $l \ge 2$.
The result may seem rather discouraging, but we hope that the use of Milnor's $K$-theory and related tools
may be useful in some constructions in the theory of linear operators.

        \section{Milnor's $K$-theory and the Goodwillie groups} \label{Milnor-theory}
We define the Milnor's $K$-group $K^M_l(k)$ of a field $k$ as follows (\cite{MR0349811}):
\begin{definition} \label{Milnor-symbol}
The $l$-th Milnor's $K$-group $K^M_l(k)$ is the additive quotient group of the tensor product
$(k^\times) ^{\otimes l}=k^\times \otimes k^\times \otimes \dots \otimes k^\times$ ($l$-times) by the
subgroup generated by the elements of the form $a_1 \otimes \dots \otimes a_i \otimes \dots \otimes a_j \otimes \dots \otimes a_l \in (k^\times) ^{\otimes l}$
where $a_i + a_j =1$ for some $1 \le i < j \le l$.
We denote by $\{a_1, a_2, \dots, a_l \}$, called a Milnor symbol,
the image of $a_1 \otimes a_2 \otimes \dots \otimes a_l \in (k^\times) ^{\otimes l}$ in $K^M_l(k)$.
\end{definition}
In particular, $K^M_1(k)$ is isomorphic to the multiplicative group $k^\times$ of nonzero elements,
but the group operation is written additively, e.g.,  $\{a\} + \{b\} = \{ab\}$.
Also, we have $\{ab, c_2, \dots, c_l\} = \{a,c_2, \dots, c_l\}+\{b,c_2, \dots, c_l\}$ in $K^M_l(k)$ by the definition.

On the other hand, for a finite dimensional $k$-vector space $V$, let $GL(V)$ be the general linear group which consists of invertible $k$-linear operators on $V$.
Then, the Goodwillie group $GW_l(k)$ of $k$ is defined as follows (\cite{MSMulti}):

\begin{definition} \label{GW_l}
$GW_l(k)$ is the abelian group generated by $l$-tuples of commuting operators $(A_1, \dots, A_l)$ ($A_1, \dots, A_l \in GL(V)$ for various
finite dimensional $k$-vector space $V$,
subject to the following four kinds of relations. \\
(i) (Identity Operators) $(A_1, \dots, A_l) = 0$ when $A_i$ for some $i$ is equal to the identity operator $I \in GL(V)$. \\
(ii) (Similar Operators) $(A_1, \dots, A_l) = (SA_1S^{-1}, \dots, SA_lS^{-1})$
for any isomorphism $S: V {\overset{\sim}{\rightarrow}} W$ of $k$-vector spaces. \\
(iii)  (Direct Sum) When $V$ and $W$ are finite dimensional $k$-vector spaces,
$$ (A_1, \dots, A_l) + (B_1, \dots, B_l) = \left( A_1 \oplus B_1, \dots, A_l \oplus B_l \right)$$
for commuting $A_1, \dots, A_l \in GL(V)$ and commuting $B_1, \dots, B_l \in GL(W)$. \\
(iv) (Polynomial Homotopy) $\displaystyle \left(A_1(0), \dots, A_l(0) \right) = \left(A_1(1), \dots, A_l(1)\right)$
for commuting automorphisms  $A_1(t), \dots, A_l(t)$ of a free module of finite rank over in $k[t]$,
where $k[t]$ is the polynomial ring over $k$ with the indeterminate $t$. \\
\end{definition}

In the above definition, note that we may choose only one vector space $V$ from each class of isomorphic vector spaces to resolve
any possible set-theoretic problem. A similar remark could have been also applied during the construction of our monoid $X_l$.
It is shown in \cite{WM} that $GW_l(k)$ is isomorphic to the motivic cohomology group $H^l_{\M} \bigl(\Spec \, k , \Z(l) \bigr)$.
It is one of the most important theorems in algebraic $K$-theory that $K^M_l(k)$ is isomorphic to $H^l_{\M} \bigl(\Spec \, k , \Z(l) \bigr)$
(\cite{MR992981}). More precisely, in \cite{MSMulti},
it is shown that the assignment which sends a Milnor symbol $\{a_1, \dots, a_l\}$ to $(a_1, \dots, a_l)$, where $a_i$ is
considered simply as $1 \times 1$-matrix in each coordinate, gives rise to an isomorphism $K^M_l(k) {\overset{\sim}{\rightarrow}} GW_l(k)$.

For $k = \R$, the field of real numbers,
it is known that $K^M_l(\R)$ is isomorphic to $\Z/2 \oplus U$, where $\Z/2$ is generated by the Milnor symbol $\{-1, \dots, -1\}$
and $U$ is a uniquely divisible abelian group (\cite{MR0260844}). A Milnor symbol $\{a_1, \dots, a_l\}$ belongs to $U$ if at least
one of $a_1, \dots, a_l$ is positive. In such a case, the element can be written as $\{b_1, \dots, b_l \}$ where all $b_1, \dots, b_l$
are positive.
Accordingly, we may write $GW_l(\R) = U' \cup ( GW_l(\R)-U')$ as a set, where $U' \cong U$ is the subgroup of index 2 in $GW_l(\R)$,
which is generated by the $l$-tuples of commuting positive-definite symmetric operators.

        \section{Triviality of a trace map on the space of $l$-tuples of commuting self-adjoint linear operators} \label{triviality}

Let $X_l$ be the space of $l$-tuples of commuting self-adjoint trace-class linear operators on some Hilbert space $H$
as defined in Section \ref{intro}.

\begin{theorem}
For $l \ge 2$, there does not exist a nontrivial trace map from the space $X_l$
into $\R$ as in Definition \ref{trace}.
\end{theorem}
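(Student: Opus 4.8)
The plan is to show that every trace map $\psi$ is controlled by a single real number $c$ — its value on a one-dimensional tuple — and then to feed one explicit polynomial homotopy into condition $(iv)$ to force $c=0$. In the language of Section~\ref{Milnor-theory} this says that $\psi$ induces, on the subgroup $U'\cong U$ of $GW_l(\R)\cong K^M_l(\R)$ generated by tuples of commuting positive-definite symmetric operators, a continuous homomorphism into $\R$, and that any such homomorphism vanishes. This matches intuition: for $l=1$ the honest trace $\Tr$ is a legitimate trace map precisely because the invariant $\sum_j\log\mu_j=\log\det$ is a polynomial-homotopy invariant, whereas for $l\ge 2$ the analogous invariant $\sum_j\log\mu_{1,j}\cdots\log\mu_{l,j}$ is not.

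First I would restrict $\psi$ to the fibre $(X_l)_{\C}$ over the one-dimensional Hilbert space. A commuting $l$-tuple of self-adjoint operators on $\C$ is just an $l$-tuple of real numbers, so this restriction is a map $\phi\colon\R^l\to\R$. Condition $(ii)$ makes $\phi$ additive in each variable separately, condition $(i)$ makes it continuous, and a continuous additive map $\R\to\R$ is $\R$-linear; hence $\phi$ is $\R$-multilinear and $\phi(\lambda_1,\dots,\lambda_l)=c\,\lambda_1\cdots\lambda_l$ with $c=\phi(1,\dots,1)$. For a diagonal tuple $A_i=\operatorname{diag}(\lambda_{i,1},\dots,\lambda_{i,n})$ on $\C^n$, decomposing $\C^n$ orthogonally into coordinate lines and combining the monoid law with condition $(iii)$ gives $\psi(A_1,\dots,A_l,\C^n)=\sum_{j=1}^n\phi(\lambda_{1,j},\dots,\lambda_{l,j})=c\sum_{j=1}^n\lambda_{1,j}\cdots\lambda_{l,j}$. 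Finally, an arbitrary $(A_1,\dots,A_l,H)$ is, by the spectral theorem for commuting compact self-adjoint operators, an orthogonal sum of a tuple of zero operators — killed by additivity — and a simultaneously diagonalized tuple on a separable space; truncating each operator of the latter to its first $N$ spectral components gives finite-rank tuples converging to it in trace norm, so once $c=0$ is established, condition $(i)$ forces $\psi\equiv 0$ everywhere.

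Thus everything reduces to proving $c=0$, and this is the step I expect to require the real work: one must realize a Steinberg-type relation by an honest polynomial homotopy of commuting invertible matrices. For $l=2$ I would take
\begin{equation*}
A_1(t)=A_2(t)=\begin{pmatrix}1&t\\t^2&1+t^3\end{pmatrix},
\end{equation*}
which has determinant identically $1$ and so lies in $GL_2(\R[t])$, with $A_i(0)=I_2$ and $A_i(1)=\begin{pmatrix}1&1\\1&2\end{pmatrix}$ positive-definite symmetric with eigenvalues $\mu_\pm=(3\pm\sqrt5)/2$, neither equal to $1$; for $l\ge 3$ keep these two coordinates and put $A_3(t)=\dots=A_l(t)=2I_2$. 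Since $\log I_2=0$ kills the left side, condition $(iv)$ together with the diagonal-tuple formula above (and invariance of $\psi$ under the orthogonal diagonalization of $A_1(1)$, via condition $(iii)$) gives
\begin{equation*}
0=\psi(\log A_1(0),\dots,\log A_l(0),\C^2)=\psi(\log A_1(1),\dots,\log A_l(1),\C^2)=c\,(\log 2)^{\,l-2}\bigl((\log\mu_+)^2+(\log\mu_-)^2\bigr),
\end{equation*}
and since the last factor is strictly positive, $c=0$.

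The one genuinely delicate point is the bookkeeping in that final display: checking that this homotopy is admissible for condition $(iv)$ — constant determinant, positive-definite symmetric endpoints, pairwise commutativity for all $t$ — and that $\psi(\log A_1(1),\dots,\log A_l(1),\C^2)$ really collapses, via conditions $(i)$–$(iii)$, to $c\,(\log 2)^{l-2}\bigl((\log\mu_+)^2+(\log\mu_-)^2\bigr)$. Equivalently, this is the assertion that the class of $(A_1(1),A_2(1))$ in $GW_2(\R)\cong K^M_2(\R)$ — which vanishes, being connected to the identity tuple by the homotopy, and equal to $2\{\mu_+,-1\}=\{\mu_+,1\}=0$ on the symbol side — is detected by $\psi$ exactly in the claimed manner. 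Everything else (the reduction to $c$ and the closing trace-norm approximation) is routine linear algebra and standard operator theory.
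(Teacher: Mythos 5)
Your proposal is correct, but it takes a genuinely different route from the paper. The paper restricts $\psi$ to the submonoid $Y_l$ of finite-dimensional tuples, transports tuples of commuting positive-definite symmetric matrices into $Y_l$ via $\log$, observes that $\psi|_{Y_l}\circ\log$ descends to a continuous homomorphism on the index-two subgroup $U'$ of the Goodwillie group $GW_l(\R)\cong K^M_l(\R)$, and then quotes the structure theorem $K^M_l(\R)\cong\Z/2\oplus U$ with $U$ uniquely divisible together with Theorem A.1 of Milnor's book (extended to $l\ge 3$) to conclude that this homomorphism vanishes; simultaneous spectral resolution and density of $Y_l$ finish the argument. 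You avoid all of this $K$-theoretic input: conditions $(i)$--$(iii)$ reduce $\psi$ on simultaneously diagonalizable finite tuples to $c\sum_j\lambda_{1,j}\cdots\lambda_{l,j}$ for a single constant $c$, and then the one explicit determinant-one homotopy $\begin{pmatrix}1&t\\ t^2&1+t^3\end{pmatrix}$ (with constant blocks $2I_2$ in the remaining slots when $l\ge3$), fed into condition $(iv)$, forces $c\,(\log 2)^{l-2}\bigl((\log\mu_+)^2+(\log\mu_-)^2\bigr)=0$ and hence $c=0$; your homotopy is admissible since the two matrices coincide (so commute for all $t$), the determinant is identically $1$, and both endpoints are positive-definite symmetric, even though the intermediate matrices are not symmetric, which $(iv)$ does not require. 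In effect you replace Milnor's continuity theorem for the divisible part of $K^M_2(\R)$ by one concrete Steinberg-type relation (your class $2\{\mu_+,\mu_+\}=2\{\mu_+,-1\}=0$), which suffices because after your reduction the putative homomorphism has the very rigid form $c\sum_j\prod_i\log\mu_{i,j}$ on diagonal tuples. What the paper's route buys is the $K$-theoretic framing (the connection to $GW_l$ and motivic cohomology that motivates the article); what yours buys is a self-contained, elementary proof, and your treatment of the passage from finite to infinite dimensions (splitting off the joint kernel and truncating in trace norm within a fixed fibre, then invoking condition $(i)$) is in fact more explicit than the paper's one-line appeal to density of $Y_l$ in $X_l$.
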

\begin{proof}
We define $Y_l$ be the submonoid of $X_l$ consisting of the
symbols $(A_1, \dots, A_l, H)$ where $H$ is finite dimensional. Then $Y_l$ is dense in $X_l$ and
any trace map $\psi$ on $X_l$, when restricted to $Y_l$, induces a monoid homomorphism on $Y_l$ which, denoted
by $\psi |_{Y_l}$, enjoys the properties $(i)-(iv)$ of Definition \ref{trace}, in case of finite dimensional $H$'s.

Now let $P_l$ be the monoid whose elements are the symbols $(A_1, \dots, A_l, V)$ where $A_1, \dots, A_l$ are commuting
positive-definite symmetric linear operators on a finite dimensional real vector space $V$, where the addition is given by
$$(A_1, \dots, A_l, V_1) + (B_2, \dots, B_l, V_2) = (A_1 \oplus B_1, \dots, A_l \oplus B_l, V_1 \oplus V_2)$$
whenever it makes sense.

Then there is an obvious monoid homomorphism $P_l \rightarrow U'$ whose image generates $U'$ as a group.
In fact, this map is even surjective (\cite{MSMulti}).

We define the map $\log: P_l \rightarrow Y_l$ by $\log (A_1, \dots, A_l, V) = (\log A_1, \dots, \log A_l, V_\C)$,
where $\log$ is chosen to take values in real symmetric matrices.

$$\xymatrix@C=7pc@R=5pc{    Y_l \ar[dr]^{\psi |_{Y_l}}&    P_l \ar[l]_{\log} \ar@{->>}[r] \ar@{}[d]|{\circlearrowleft}  & U' \ar[dl]_{0}  \\
                         &  \R  &     } $$

If $\psi |_{Y_l}: Y_l \rightarrow \R$ is the restriction of a given trace map, then the composite $\psi |_{Y_l} \circ \log: P_l \rightarrow \R$
induces a continuous homomorphism from the the subgroup $U'$ of $GW_l(\R)$ into the additive group $\R$ of real numbers
since the four relations $(i)$-$(iv)$ in Definition \ref{GW_l}
are carried to a subset of the relations in Definition \ref{trace} under the map $\log: P_l \rightarrow Y_l$.

But, Theorem A.1. of \cite{MR0349811} shows that any continuous homomorphism from $U' \ \cong U \subset K^M_2(\R)$ into $\R$
should be trivial when $l=2$. The argument there can be easily generalized for $l \ge 3$ and accordingly we conclude that the induced map $U' \rightarrow Y_l$ is trivial.
Thanks to the existence of a simultaneous spectral resolution, for any $(A_1, \dots, A_l, H) \in Y_l$,
there exists some $(B_1, \dots, B_l, \C^n)$ in the image of the log map $\log : P_l \rightarrow Y_l$ such that
the operators $(A_1, \dots, A_l)$ and $(B_1, \dots, B_l)$ have the same joint spectrums when counted with multiplicities.
Therefore, by Definition \ref{trace} $(iii)$, we conclude that $\psi |_{Y_l} = 0$ on $Y_l$. Consequently, $\psi=0$ on the whole $X_l$ since $Y_l$ is dense in $X_l$.
\end{proof}

\section*{Acknowledgments}
This work was supported by INHA University Research Grant (INHA-35039).

     \bibliographystyle{mrl}

     \renewcommand{\indexname}{Index}
     \printindex

\end{document}